\newtheorem{prop}{Proposition}
\newtheorem{definition}[prop]{Definition}
\newtheorem{theorem}[prop]{Theorem}
\newtheorem{cor}[prop]{Corollary}
\newtheorem{lemma}[prop]{Lemma}
\newtheorem{rem}[prop]{Remark}
\newtheorem{example}[prop]{Example}
\numberwithin{prop}{section}
\newcommand{\R}{\ensuremath{\mathcal{R}}}
\newcommand{\BR}{\ensuremath{\mathbb{R}}}
\newcommand{\On}[1]{\mathrm{O}_{#1}\left(\BR\right)}
\newcommand{\ba}{\sqrt{\frac{3}{4}-\cos^2(\pi/5)}}
\newcommand{\co}{\cos(\pi/5)}
\newcommand{\Rperp}[1]{\R_{#1}^{\perp}}
\newcommand{\si}{\sin{\pi/5}}
\DeclarePairedDelimiter\abs{\lvert}{\rvert}%
\DeclarePairedDelimiter\norm{\lVert}{\rVert}%
\let\oldabs\abs
\def\abs{\@ifstar{\oldabs}{\oldabs*}}
\let\oldnorm\norm
\def\norm{\@ifstar{\oldnorm}{\oldnorm*}}
\title{Optimal Finite Homogeneous sphere approximation  }
\author{Omer Lavi}
\begin{document}

\begin{abstract}
The two dimensional sphere can't be approximated by finite homogeneous spaces. We describe the optimal approximation and its distance from the sphere.
\end{abstract}

\maketitle

\section{Introduction}

Tsachik Gelander and Itai Benjamini asked me (see also Remark 1.5 in \cite{gelander2012limits}) what is the optimal approximation of the sphere by a finite homogeneous space. Denote by $d_H$ the Hausdorff metric. We will say that a finite $\On{n}$-homogeneous space \(X\) is an optimal approximation of a set \(A\) if $d_H(X,A) \leq d_H(Y,A)$ for every finite $\On{n}$-homogeneous space \(Y\). In that case, we say that the approximation distance of \(A\) is $d_H(X,A)$. A set $A \subset \BR^n$ is approximable by finite homogeneous spaces if it is a limit of such. An n-dimensional torus, for example, can be approximated by a sequence of regular graphs with bounded geometry (see \cite {benjamini2012scaling}). Gelander showed that these are the only examples: a compact manifold can be approximated by finite metric homogeneous spaces if and only if it is a torus (Corollary 1.3 in \cite{gelander2012limits}).

 \begin{theorem} \label{thm:main result}
 	The approximation distance of the sphere (calculated up to 4 digits) is 0.3208.
 \end{theorem}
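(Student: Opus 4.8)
The plan is to identify the finite $\On{3}$-homogeneous subsets of $\BR^3$, which are exactly the orbits of finite subgroups of $\On{3}$ acting on a single point (equivalently, the vertex sets of the regular and quasi-regular polytopes together with the orbits of the dihedral and cyclic subgroups), and then minimize the Hausdorff distance from such an orbit to a round sphere $S$ over all choices of orbit and all choices of radius/scaling. Since $d_H$ is scale-covariant, after fixing the sphere to be the unit sphere the only remaining continuous parameter is the radius $r$ of the orbit (the distance of the chosen base point from the origin); for each finite subgroup $G<\On{3}$ and each $G$-orbit type on the sphere, $d_H$ as a function of $r$ is a maximum of finitely many convex functions, hence piecewise smooth and unimodal enough to minimize explicitly. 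So the problem reduces to a finite computation once one argues that only finitely many orbit types can possibly beat a fixed benchmark.

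**The key steps, in order.** First I would record that a finite homogeneous $X\subset\BR^3$ lies on a sphere centered at its centroid, and that the isometry group must act transitively, so $X$ is a single orbit of a finite subgroup $G\le\On{3}$; enumerate the possibilities via the classification of finite subgroups of $\On{3}$ ($C_n$, $D_n$, and the rotation/reflection groups of the tetrahedron, octahedron, icosahedron, plus the central extensions). Second, for the "spread-out" orbits — the cyclic and dihedral ones — show the Hausdorff distance to any sphere is bounded below by a constant exceeding $0.3208$, because such an orbit lies near a circle (or two circles) and a sphere has points far from any union of a bounded number of circles; this kills all infinite families in one stroke. Third, among the finitely many remaining highly symmetric orbits, the icosahedral ones are the "roundest"; compute, for the relevant icosahedral orbit (I expect the orbit giving the $60$-point or the $12$-point configuration, optimized against a sphere of cleverly chosen radius interpolating between the circumscribed and "deep-hole" radii), the exact optimal $r$ and the resulting $d_H$, and check it equals $0.3208$ to four digits. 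Fourth, verify that every other sporadic orbit (tetrahedral, octahedral, and the smaller icosahedral orbits) gives a strictly larger optimal distance.

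**The main obstacle** is Step 3: pinning down \emph{which} icosahedral orbit is optimal and solving the resulting min–max in closed form. The Hausdorff distance from an orbit $X$ (on a sphere of radius $r$) to the unit sphere is $\max\bigl(\,\abs{r-1}+\text{(outer defect)}\,,\ \text{(inner defect)}\,\bigr)$ where the inner defect is governed by the deepest hole of $X$ on its sphere — i.e.\ by the covering radius of the spherical code $X$ — and the outer defect by how far $X$ pokes past radius $1$. Optimizing over $r$ balances these two, so the final answer is an algebraic number built from $\cos(\pi/5)$; matching the paper's constants $\ba$ and $\co$ suggests the extremal configuration involves the regular pentagon cross-sections of the icosahedron, and the value $0.3208$ should emerge as an explicit expression in $\cos(\pi/5)$ and $\sin(\pi/5)$. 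The remaining work — confirming unimodality in $r$, and confirming no dihedral/cyclic orbit sneaks under the bound for small point counts — is routine but must be done carefully to make the "optimal" claim rigorous rather than merely numerical.
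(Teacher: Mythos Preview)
Your plan has a real gap in the parametrization step. You assert that ``after fixing the sphere to be the unit sphere the only remaining continuous parameter is the radius $r$ of the orbit,'' but this is false: for a fixed finite $G<\On{3}$ the orbits are parametrized by the full three-dimensional fundamental domain $B\subset\BR^3$, not by a one-dimensional radial parameter per discrete ``orbit type.'' In particular, for the icosahedral group $H_3$ the generic orbit (trivial stabilizer, $120$ points) depends on \emph{two} angular parameters within the spherical triangle $B'$ in addition to the radius. Your enumeration by named polyhedra (Platonic, Archimedean) only samples the vertices, edge-midpoints, and barycenter of $B'$; the optimum lies at none of these. Concretely, the rhombicosidodecahedron gives $0.3354$ and the truncated icosidodecahedron gives $0.3773$, while the true minimum $0.3208$ is attained at an interior point of $B$ that does not correspond to any classical solid. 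Your guess of a $12$- or $60$-point configuration is therefore wrong; the optimal orbit has $120$ points.

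The missing idea that collapses the three-parameter search is this: for any $x,y$ in the closed fundamental domain $B$ and any $g\in G$, one has $d(x,y)\le d(gx,y)$; i.e.\ $B$ is a Dirichlet cell for every one of its points simultaneously. This follows from the reflection-group structure (every element is conjugate into a vertex group, and reflections push points on the same side of a mirror farther apart). Granting this, $d_H(Gx,S^2)=\sup_{y\in B'}d(x,y)$ for every $x\in B$, so minimizing over $x$ is exactly the problem of finding the Chebychev center of the spherical triangle $B'$ --- a single smallest-enclosing-ball computation, not a min--max over a family of covering-radius problems. Your inner/outer-defect balancing act is then unnecessary: the optimal $x$ automatically sits strictly inside the unit ball (so $|r-1|$ never appears as a separate term), equidistant from the three vertices of $B'$, and the distance $0.3208$ is that common distance. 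Without the Dirichlet-cell lemma your Step~3 cannot be completed, and with it the problem becomes a short explicit computation rather than the delicate optimization you outline.
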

\begin{rem}
	Note that in this paper we restrict our attention to $\On{3}$ spaces and their Housdorff distances from the sphere. It is an interesting question if optimal finite homogeneous sphere approximation in the Gromov Housdorff sense exist and what are they. 
\end{rem}

In this paper we will prove this theorem and will describe explicitly the optimal approximation of the sphere.

\section {Finite Coxeter groups}
We are interested in finite homogeneous spaces of $\BR^3$. We wish then to understand the finite subgroups of the group of linear isometries of $\BR^3$, which we denote by $\On{3} $.

Coxeter described discrete subgroups of the group of isometries of $\BR^n$ in the enjoyable paper \cite{coxeter1940regular}. The exposition here is based on his description. These groups, known as Coxeter groups, are in general groups generated by reflections. There are also subgroup of reflection groups. We are interested, however, in maximal finite groups, which are always reflection groups. One can think of them as describing $n-1$ dimensional mirrors of a kaleidoscope. If we compose two reflections with angle $ \phi$ between them, we get a rotation of angle $2\phi$.
Note that if $G$ is a group generated by reflections and $G$ is finite, then the angles between any pair of reflections must be either $0$ or $\pi/n$ for some integer $n$. We adopt the notation that $\pi/\infty$ describes reflections about two parallel hyper planes. If the group is a subgroup of $\On{3}$ (instead of the complete isometry group) the reflection hyper planes are of course concurrent. In this case the sphere is left invariant under the action. The restriction of the action to the sphere gives an action on the sphere, $S^2$. We reflect then in great circles of the sphere. Note that if $R$ is a reflection related to a plane $\R<V$ in some vector space, then $\R$ is the space of invariant vectors of $R$. Similarly, if $H=\langle R_1,R_2 \rangle$ is the group generated by reflections related to $\R_1,\R_2$ respectively then $\R_1 \cap \R_2$ is the space of $H$-invariant vectors. We denote this space by $V^H$. In the case that $V=\BR^3$, and $H$ is generated by two reflections, $V^H \cap S^2$ has two antipodal points. Throughout the sequel, we will denote the reflecting plane related to a reflection $R$ by $\R$. We fix a unit vector at the orthogonal complement and denote it by $\R^{\perp}$.

The Coxeter groups can be described by diagrams called the Coxeter Dynkin diagrams, which we presently describe. The nodes of a diagram are related to reflections. Each node represents a reflection. We already mentioned that the angles between two planes has to be of the form $\pi / n$ for some (possibly infinite) integer $n$. Two vertices are connected if the angle between the reflecting planes is smaller than $\pi/2$ (this notation makes sense, otherwise the group is just a product of the two groups). In this case the number above the connecting edge is just $n$ (a common practice which we will adopt here, is to omit the number when $n=3$).
\begin{example} \label{exam: coxeter diagram}
The diagram
\dynkin[Coxeter]{B}{3} describes a group named $B_3$ (see more details below), which is generated by three reflections of mirrors with angles $\pi/3,\pi/4$ and $\pi/2$.
\end{example}
 Set $V= \BR^3$. Finite groups of $\On{3}$ are generated by three or two reflections. The volume enclosed by the reflection hyperplane is then a fundamental domain for the action of the group on $\BR^3$. In the sequel we will denote this fundamental domain by $B$. For the action on the unit sphere we have then a fundamental domain which is an intersection of $B$ and the sphere. We will denote this domain by $B'$. When the group is generated by three reflections, the fundamental domain is thus a spherical triangle. If $G=\langle R_1,R_2,R_3\rangle$ and we set $G_1=\langle R_1,R_2\rangle,\quad G_2 = \langle R_2,R_3\rangle,\quad G_3=\langle R_3,R_1\rangle $ then a choice of points in $V^{G_1}\cap S^2, \quad V^{G_2}\cap S^2, \quad V^{G_3}\cap S^2$ gives us the vertices of this triangle and a choice of respective segments in $V^{R_i} \cap S^2$ gives the sides of this triangle. We will therefore denote the groups $G_i$ as vertex groups and the groups $R_i$ as edge groups. Note that every group element in $G$ is conjugated to an element in a vertex groups.

A group related to such diagram can be described the by a notation $[m,n,\ldots ,k]$. As finite groups of $\On{3}$ can always be generated by two or three reflections, the Coxeter groups related to finite subgroup of $\On{3}$ are of the form $[m,n]$. The groups $[2,n]$ are defined for every integer $n$ while the groups $[3,n]$ can have $ n=3,4,5 $.
\begin{example}
	 The group $B_3$ described by the diagram in Example \ref {exam: coxeter diagram} above is denoted by $[3,4]$.
\end{example}
 A Coxeter group of the form $[m,n]$ can be presented as:
\begin {equation} \label{presentation of Coxeter groups}
\langle R_1,R_2,R_3 | R_1^2=R_2^2=R_3^2=(R_1R_2)^m=(R_2R_3)^n=(R_1R_3)^2=1 \rangle.
\end{equation}
 Note that the groups $[m,n]$ and $[n,m]$ are isomorphic.
 We add the notation
$$ S_1= R_1R_2 \qquad S_2=R_2R_3 \text{,  and}\qquad S_3=R_3R_1. $$
Then $S_1$ and $S_2$ generate a subgroup of index two. This group is the group of orientation preserving isometries or simply the group of all rotations. The element $S_1$ is a rotation  whose axis is simply the intersection $\R_1 \cap \R_2 = V^{G_1}$, while the rotation $S_2$ is a rotation whose axis is simply the intersection of the planes related to $R_2 \text{ and } R_3$. The angles of these rotations is $2\pi/m$ and $2\pi/n$ respectively. Note that $S_3=R_1R_3=S_1S_2$ is a $\pi$ rotation about the intersection of the planes related to $R_1 \text{ and }R_3$. The groups of rotations are denoted by $[m,n]'$ and can be presented as
$$\langle S_1,S_2| S_1^m=S_2^n=(S_1S_2)^2=1 \rangle. $$

First note the infinite family of (almost) cyclic groups. The groups $[2,n]$ has only one axis for the rotations. They have an index two cyclic subgroup. An orbit of a point under the action of such group is contained in at most two latitudes. Optimally these latitudes will be located at an angle $\pi/4$ from the equator (possibly on a sphere of shorter radius). In that case the distance from any point $x$ on the orbit to the north pole $n$ will be $d(x,n) =\frac {\sqrt{2}}{2}$. Therefore the Hausdorff distance between homogeneous spaces of these groups to the sphere is at least 0.7.

We are mainly interested in groups that have more than one axis for their rotations. The groups $[3,3],[3,4],[3,5]$, whose Coxeter diagrams are \dynkin[Coxeter]{A}{3},  \dynkin[Coxeter]{B}{3} and \dynkin[Coxeter]{H}{3} respectively, are denoted by $A_3, B_3,H_3$ respectively, and their rotational subgroups, $[3,3]',[3,4]',[3,5]'$, are called the \textit{tetrahedral, octahedral} and \textit{icosahedral} groups respectively. The group $[3,3]$ is isomorphic to the symmetric group $S_4$ and has 24 elements. One of its homogeneous spaces is the Platonic solid, tetrahedron. The group $[3,4]$ has 48 elements. It is acting transitively on the cube and on its dual, the octahedron.  Finally the group $H_3$ is isomorphic to the product of the alternating group on five elements, $A_5$ (not to be confused with the Coxeter symbol) and the group with two elements. It has 120 elements, and the icosahedron and its dual, the dodecahedron, are $H_3$-homogeneous spaces. Another notable $H_3$-homogeneous space is the Archimedean solid, the truncated icosahedron, known mainly for its roll in sports (see more details below). The truncated icosidodecahedron has 120 vertices. It has 62 faces, 30 out of them are squares, 20 are regular hexagons and 12 are regular decagons (or 10-gons). It is the orbit of the barycenter of the fundamental domain of the $H_3$.

\subsection{Orbits of Coxeter groups}
Recall that when a group \(G\) is acting on a space \(X\), we denote by $Gx$ the orbit of a point $x \in X$. We are now interested in the groups $[3,n]$. The orbit of a point under the action of these groups can be viewed as lighting a candle and taking all its images in the kaleidoscope generated by the mirrors we relate to reflections. Of course  different points will result in different orbits under the same group.

\begin {example}[The Cube] \label{example: the cube}
An easy example is the Coxeter group $A_1 \times A_1\times A_1$ whose diagram is simply \dynkin[Coxeter]{A}{1} \dynkin[Coxeter]{A}{1} \dynkin[Coxeter]{A}{1}. The group is generated by $3$ mirrors which are perpendicular to each other. We can assume then that they are perpendicular to each of the axes.

The fundamental domain for the action on the sphere is then described by the spherical triangle whose vertices are
$$\left\lbrace (1,0,0), (0,1,0),(0,0,1) \right\rbrace. $$
The orbit of the center of this triangle, i.e. the point $c=(1,1,1)/{3}$, is the vertices of a cube. On the other hand the orbit of the point $(1,0,0)$ has two points only. It is interesting to mention that the distance between the cube described here and the sphere is exactly $\sqrt{\frac{2}{3}}$ (since this is the distance from $c$ to the vertices of the fundamental domain). The distance between the two points orbit to the sphere can easily be seen to be $\sqrt{2}$.
\end{example}

Given a group \(G\) we would like to find the point whose orbit is closest to the sphere among all other \(G\) orbits.
\begin{definition}\label{def: best approximation}
	Given a group $G < \On{3}$ we say that $x \in \BR^3$ is an optimal \(G\) \textit{sphere approximation} if
	$$d_{H}(G x ,S^2) = \inf_{y\in \BR^3}\left\lbrace d_{H}(Gy,S^2)\right\rbrace. $$
\end{definition}
The search for optimal approximation can be limited to fundamental domain:
\begin{rem}\label{rem: enough to look for approximation inside a fundamental domain}

	If $B\subset \BR^3$ is a fundamental domain for the action of \(G\) then $x\in B$ is optimal \(G\) approximation if and only if
		$$d_{H}(G x ,S^2) = \inf_{y\in B}\left\lbrace d_{H}(Gy,S^2)\right\rbrace. $$
		
 \end{rem}
We can define a distance between groups and the sphere.
\begin{definition} \label {def: distance between a group to the sphere}
	If \(G\) is a group the we define the distance between \(G\) and the sphere as
	$$d_H(G,S^2)=\inf_{y\in \BR^3}\left\lbrace d_{H}(Gy,S^2)\right\rbrace. $$
\end{definition}
\begin{rem}\label{rem: best approximation is in fundemantal domain}
	Note that  if $B$ is a fundamental domain and $B'=B \cap S^2$ then
	$$d_H(G,S^2) = \inf_{x\in B}\sup_{y \in B'}\left\lbrace d(Gx,y)\right\rbrace. $$
\end{rem}

	The next result will provide a candidate for optimal approximation.	
	The following lemma is well known (see for example Lemma 2.2.7 in \cite{bekka2008kazhdan}).
	\begin{lemma}[Lemma of the center] \label{lem: lemma of the center}
		Let $V$ be a real or complex Hilbert space, and let $A \subset V$ be a non-empty bounded set of \(V\). Among all closed balls in $V$ containing \(A\), there exist a unique one with minimum radius.
	\end{lemma}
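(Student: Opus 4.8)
The plan is to exploit the one feature of Hilbert space that makes the statement true — the parallelogram identity — since the claim is false in general Banach spaces. Set
\[
r_{0} = \inf\left\{ r \geq 0 \ : \ A \subseteq \overline{B}(c,r) \text{ for some } c \in V \right\},
\]
which is well defined: the set over which we take the infimum is non-empty because $A$ is bounded, and $r_{0} \geq 0$. I would first settle uniqueness, which is a one-line computation, and then existence, which requires a completeness argument.

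For \emph{uniqueness}, suppose $\overline{B}(c_{1},r_{0})$ and $\overline{B}(c_{2},r_{0})$ both contain $A$. For every $a \in A$ the parallelogram identity applied to the vectors $a-c_{1}$ and $a-c_{2}$ gives
\[
\left\| a - \tfrac{c_{1}+c_{2}}{2} \right\|^{2} = \tfrac12\|a-c_{1}\|^{2} + \tfrac12\|a-c_{2}\|^{2} - \tfrac14\|c_{1}-c_{2}\|^{2} \leq r_{0}^{2} - \tfrac14\|c_{1}-c_{2}\|^{2}.
\]
Hence $A$ lies in the closed ball centered at $\tfrac{c_{1}+c_{2}}{2}$ of radius $\sqrt{r_{0}^{2}-\tfrac14\|c_{1}-c_{2}\|^{2}}$; if $c_{1}\neq c_{2}$ this radius is strictly smaller than $r_{0}$, contradicting the definition of $r_{0}$. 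Therefore $c_{1}=c_{2}$.

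For \emph{existence}, pick closed balls $\overline{B}(c_{n},r_{n}) \supseteq A$ with $r_{n}\downarrow r_{0}$. The same identity with $c_{m},c_{n}$ in place of $c_{1},c_{2}$ shows that for every $a\in A$,
\[
\left\| a - \tfrac{c_{m}+c_{n}}{2} \right\|^{2} \leq \tfrac12 r_{m}^{2} + \tfrac12 r_{n}^{2} - \tfrac14\|c_{m}-c_{n}\|^{2}.
\]
Taking the supremum over $a\in A$: the left-hand side then equals the squared radius of \emph{some} closed ball containing $A$, hence is $\geq r_{0}^{2}$, so $\|c_{m}-c_{n}\|^{2} \leq 2r_{m}^{2}+2r_{n}^{2}-4r_{0}^{2} \to 0$ as $m,n\to\infty$. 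Thus $(c_{n})$ is Cauchy, and by completeness of $V$ it converges to some $c\in V$; then $\|a-c\| = \lim_{n}\|a-c_{n}\| \leq \lim_{n} r_{n} = r_{0}$ for every $a\in A$, so $A \subseteq \overline{B}(c,r_{0})$, a ball of minimal radius. The only genuinely delicate point is showing the centers form a Cauchy sequence in the existence step; I expect this to be the main (indeed essentially the only) obstacle, and it is routine once one pairs the parallelogram law with the minimality of $r_{0}$. This lemma will then be applied with $A=B'$ to single out the center of the minimal enclosing ball of the fundamental domain as the candidate for an optimal $G$ sphere approximation.
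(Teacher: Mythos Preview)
Your proof is correct and is the standard argument via the parallelogram identity; there is nothing to add or fix. Note that the paper does not actually supply a proof of this lemma: it quotes it as well known and refers to Lemma~2.2.7 in \cite{bekka2008kazhdan}, so there is no in-paper argument to compare against.
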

	\begin{definition}
		The center of the unique closed ball with minimal radius containing \(A\) in the previous lemma is called Chebychev center (or circum center) of \(A\).
	\end{definition}

	\begin{example}\label{exam: Chebychev center of triangle}
		Let $A$ be an Euclidean triangle. In this case finding the Chebychev center is an easy task. For obtuse or right triangle the center is just at the middle of the long side. Otherwise it is at the intersection of the perpendicular bisectors of the sides of the triangle.
	\end{example}
	
	\begin {rem} \label{rem: are we taking pointson sphere}
	Since the fundamental domain $B'$ is not convex the Chebychev center is not contained in $B'$. Usually it will not be on the unit sphere $S^2$ but inside the ball of radius one (see for example the cube from Example \ref{example: the cube}).
	
\end{rem}
	
	 \subsection{Fundamental domain of Coxeter groups}
	 We describe now the fundamental domains of the $3$ dimensional Coxeter groups.
	 In general if the group is defined by $[n,m]$ (in our case we will always have $n=3$ ), then we have $3$ planes. As the group $\On{3}$ is acting transitively on $2$-dimensional subspaces, we can assume without any loss of generality that one of the planes, which we denote by $\R_1$, is $XY$, i.e. that it is defined by the normal vector $\Rperp{1}=(0,0,1)$. Note that $\On{2}$ is embedded naturally in $\On{3}$ as the stabilizer of $\R_1$, and is acting transitively on one dimensional subspaces. We can assume then that $\R_3$ is the plane $XZ$. It is perpendicular to $\R_1$ and its normal is $\Rperp{3} = (0,1,0)$. (We number this plane by $\R_3$ to be consistent with the notation of the Coxeter diagrams.)
	 The last space, $\R_2$ whose unit normal we denote by $\Rperp{2}$ intersects the other two with angles of $\pi/3$ and $\pi/m$ respectively. Set $\Rperp{2}=(x,y,z)$ then
	 \begin{equation}\label{eq: formula for z and y}
	 \langle \Rperp{1},\Rperp{2} \rangle= z=\cos(\pi/3)=\frac{1}{2}
	 \qquad \langle \Rperp{2},\Rperp{3} \rangle= y=\cos(\pi/m).
	 \end{equation}
	 Next, since we restrict our attention to unit vectors we can compute $x$:
	 \begin{equation}\label{eq: formula for x}
	 1=\langle \Rperp{2},\Rperp{2} \rangle= x^2+\cos^2(\pi/m)+\cos^2(\pi/3).
	 \end{equation}
	 %
	 %Set $v_2=(x_2,y_2,z_2)$ and $v_3=(x_3,y_3,z_3)$ for the other two normal unit vectors. Then the other two normal vectors have to satisfy the following equations:
	
	 We can calculate now spherical triangles for the three groups.
	 \begin{enumerate}
	 	\item \textbf{The tetrahedral group $A_3$.} The group \dynkin[Coxeter]{A}{3} , also denoted by $[3,3]$ is generated by three reflections whose angles are $\pi/3,\pi/3$ and $\pi/2$. As explained above, there is a fundamental domain described by the following three unit normal vectors:
	 	\begin{enumerate}
	 		\item $\Rperp{1}= (0,0,1)$
	 		\item $\Rperp{2} = (\frac{1}{\sqrt{2}},\frac{1}{2},\frac{1}{2})$
	 		\item $\Rperp{3} = (0,1,0)$
	 	\end{enumerate}
	 	Now we wish to construct a spherical triangle. The intersection $\R_1 \cap \R_3$ is the \(X\) axis, we denote the first vertex $z=(1,0,0)$. Next $\R_1\cap \R_2 = \left\lbrace (t,-\frac{2}{\sqrt{2}}t,0) : t\in \BR \right\rbrace $. Therefore we set $x = (\frac{1}{\sqrt{3}}, -\frac{2}{\sqrt{6}},0)$. Finally by symmetry,  $y = (\frac{1}{\sqrt{3}},0, -\frac{2}{\sqrt{6}})$.

	 	\item \textbf{The octahedral group $B_3$.} The group \dynkin[Coxeter]{B}{3} , also denoted by $[3,4]$ is generated by three reflections whose angles are $\pi/3,\pi/4$ and $\pi/2$. As explained above, there is a fundamental domain described by the following three unit normal vectors:
	 	\begin{enumerate}
	 		\item $\Rperp{1}=(0,0,1)$
	 		\item $\Rperp{2}=(\frac{1}{2},\frac{1}{\sqrt{2}},\frac{1}{2})$
	 		\item $\Rperp{3}=(0,1,0)$
	 	\end{enumerate}
	 	Now we wish to construct a spherical triangle. The intersection $\R_1 \cap \R_3$ is the \(X\) axis, we denote the first vertex $z=(1,0,0)$. Next  $\R_1\cap \R_2 = \left\lbrace (t,-\frac{\sqrt{2}}{2}t,0) : t\in \BR \right\rbrace $. Therefore we set $x = (\frac{\sqrt{2}}{\sqrt{3}}, -\frac{1}{\sqrt{3}},0)$. Finally $\R_2\cap \R_3 = \left\lbrace (t,0,-t) : t\in \BR \right\rbrace $. Therefore we set $y = (\frac{1}{\sqrt{2}},0, -\frac{1}{\sqrt{2}})$.
	 	\begin{figure}\label{fig: f.d. octahedron}
	 		
	 		\includegraphics[width=3.5cm, height=2.5cm]{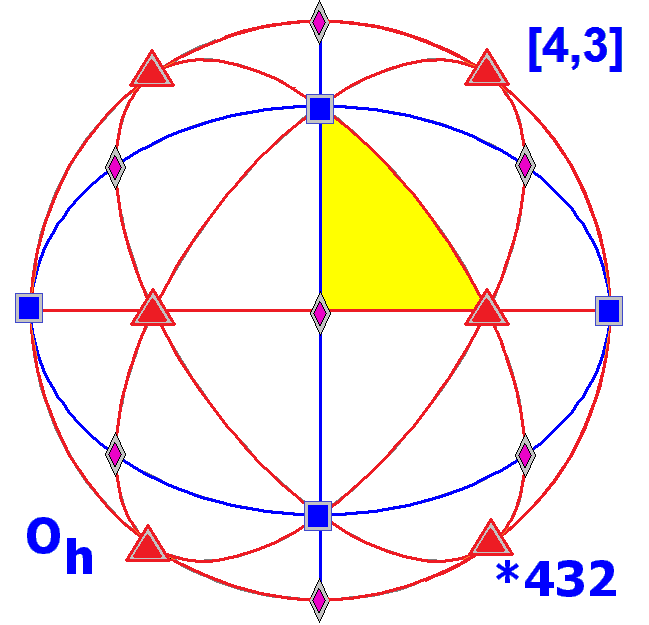}
	 		\caption{The octahedron group. A fundamental domain and symmetries. (Figure taken from Wikipedia)}
	 	\end{figure}
	 	
	 	\item \textbf{The icosahedral group $H_3$} The group \dynkin[Coxeter]{H}{3} , also denoted by $[3,5]$ is generated by three reflections whose angles are $\pi/3,\pi/5$ and $\pi/2$. As explained above, there is a fundamental domain described by the following three unit normal vectors:
	 	\begin{enumerate}
	 		\item $\Rperp{1}=(0,0,1)$
	 		\item $\Rperp{2}=(\sqrt{\frac{3}{4}-\cos^2(\pi/5)},\cos(\pi/5),\frac{1}{2})$
	 		\item $\Rperp{3}=(0,1,0)$
	 	\end{enumerate}
	 	Now we wish to construct a spherical triangle. The intersection $\R_1 \cap \R_3$ is the \(X\) axis, we denote the first vertex $z=(1,0,0)$. Next $\R_1\cap \R_2 = \left\lbrace (t\cos(\pi/5),-t\sqrt{\frac{3}{4}-\cos^2(\pi/5)},0) : t\in \BR \right\rbrace $. Therefore we set
	 	$$x = \frac{2}{\sqrt{3}}(\cos(\pi/5),-\sqrt{\frac{3}{4}-\cos^2(\pi/5)},0).$$
	 	Finally, $\R_2 \cap \R_3 = \left\lbrace(\frac{1}{2}t,0,-t\sqrt{\frac{3}{4}-\cos^2(\pi/5)}): t \in \BR \right\rbrace $, so set
	 	$$y = \frac{1}{\sin(\pi/5)}(\frac{1}{2},0,-\sqrt{\frac{3}{4}-\cos^2(\pi/5)}).$$
	 	(See figure \ref{fig: f.d. icosahedron}, $z$ is represented by the red dot, $y$ is the blue dot and  the red triangle represents $x$.)
	 	
	 	\begin{figure}\label{fig: f.d. icosahedron}
	 		
	 		\includegraphics[width=3.5cm, height=2.5cm]{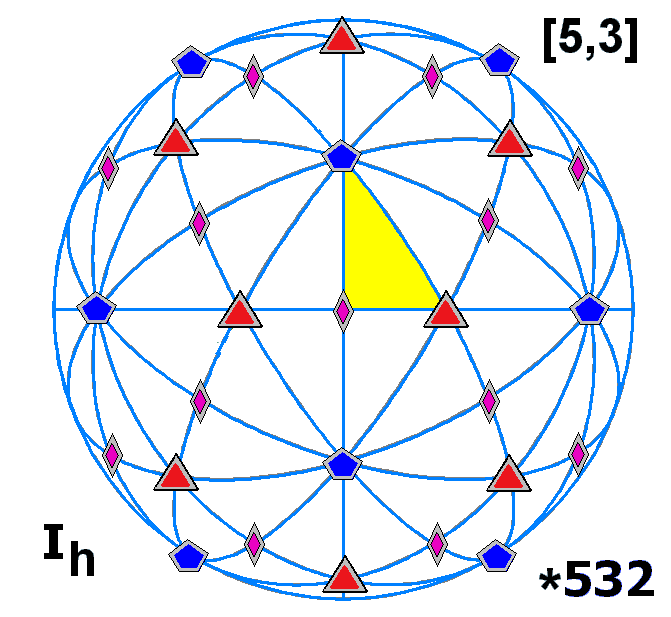}
	 		\caption{The icosahedral group. A fundamental domain and symmetries. (Figure taken from Wikipedia.)}
	 	\end{figure}
	 	
	 \end{enumerate}

	 We summarize the above in the following table (for later use we calculated also the midpoints of every side):
\begin{table}[h]
	 \begin{tabular}{|c|c|c|p{7.5cm}|}
	 	\hline
	 	& $A_3$ & $B_3$ & $H_3$  \\
	 	\hline
	 	\(z\)&$(1,0,0)$  &$(1,0,0)$  &$(1,0,0)$  \\
	 	\hline
	 	\(x\)&$ (\frac{1}{\sqrt{3}}, -\frac{2}{\sqrt{6}},0)$  & $(\frac{\sqrt{2}}{\sqrt{3}}, -\frac{1}{\sqrt{3}},0)$ &$  \frac{2}{\sqrt{3}}(\cos(\pi/5),-\sqrt{\frac{3}{4}-\cos^2(\pi/5)},0)$ \\
	 	\hline
	 	\(y\)&$(\frac{1}{\sqrt{3}},0, -\frac{2}{\sqrt{6}})$  & $ (\frac{1}{\sqrt{2}},0, -\frac{1}{\sqrt{2}})$& $\frac{1}{\sin(\pi/5)}(\frac{1}{2},0,-\sqrt{\frac{3}{4}-\cos^2(\pi/5)})$ \\
	 	\hline
	 	$m_1=\frac{x+z}{2}$ &$ (\frac{1+\sqrt{3}}{2\sqrt{3}}, -\frac{1}{\sqrt{6}},0)$ & $(\frac{\sqrt{2}+\sqrt{3}}{2\sqrt{3}}, -\frac{1}{2\sqrt{3}},0)$  &$  \frac{1}{\sqrt{3}}(\cos(\pi/5)+\frac{\sqrt{3}}{2},-\sqrt{\frac{3}{4}-\cos^2(\pi/5)},0)$ \\
	 	\hline
	 		$m_2 =\frac{x+y}{2}$ &$ (\frac{1}{\sqrt{3}}, -\frac{1}{\sqrt{6}},-\frac{1}{\sqrt{6}})$  &$(\frac{2+\sqrt{3}}{2\sqrt{6}}, -\frac{1}{2\sqrt{3}}, -\frac{1}{2\sqrt{2}})$&
	 	 $ \frac{\ba}{\sqrt{3}}\left(\frac{\sqrt{3}\si+4\co}{4\ba} ,-1,-\frac{\sqrt{3}}{2\si} \right)  $ \\
	 	
%	 	$m_2 =\frac{x+y}{2}$ &$ (\frac{1}{\sqrt{3}}, -\frac{1}{\sqrt{6}},-\frac{1}{\sqrt{6}})$  &$(\frac{2+\sqrt{3}}{2\sqrt{6}}, -\frac{1}{2\sqrt{3}}, -\frac{1}{2\sqrt{2}})$& $ \frac{1}{2}\left[  \frac{\sqrt{3}}{2}(\sqrt{\frac{3}{4}-\cos^2(\pi/5)},-\cos(\pi/5),0)+$
%	 	$\frac{1}{\sin(\pi/5)}(\frac{1}{2},0,-\sqrt{\frac{3}{4}-\cos^2(\pi/5)})\right] $\\
	 	\hline
	 	$m_3=\frac{z+y}{2}$ &$(\frac{1+\sqrt{3}}{2\sqrt{3}},0, -\frac{1}{\sqrt{6}})$  &$ (\frac{1+\sqrt{2}}{2\sqrt{2}},0, -\frac{1}{2\sqrt{2}})$ &$\frac{1}{2\sin(\pi/5)}(\frac{1}{2}+{\sin(\pi/5)},0,-\sqrt{\frac{3}{4}-\cos^2(\pi/5)})$ \\
	 	\hline

	 \end{tabular}
\label{tab: spherical triangles}
\caption{Vertices and segment midpoints calculated for the spherical triangles.}

	 \end{table}

	 \subsection {The optimal sphere approximation of a group}
	 We will now show that for a fixed $n$ the Coxeter group $[3,n]$ has an optimal sphere approximation. Note that if two points $x,y \in \BR^3$ are in the same side of a plane $\R$, then  reflecting $x$ about this plane will enlarge the distance from $y$. The next few lemmas will show that this phenomena happens for all the elements of $G$. Fix $n \in \left\lbrace 3,4,5\right\rbrace $ and set $G=[3,n]$.

	 \begin{lemma}\label{lem: reflections move appart}
	 	Let \(R\) be a reflection about the plane $\R$. Let $\Rperp{ }$ be a normal to \(\R\), and suppose that $x,y \in \BR^3$ are two points with
	 	$$\langle x,\Rperp{}\rangle \langle y,\Rperp{} \rangle \geq 0 $$
	 	then,
	 	$$d(x,y) \leq d(Rx,y). $$
	 \end{lemma}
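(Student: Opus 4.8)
The plan is to reduce the statement to a one-dimensional computation by decomposing $\BR^3$ as an orthogonal direct sum. Write $\BR^3 = \R \oplus \BR\Rperp{}$, where $\R$ is the reflecting plane and $\Rperp{}$ is a unit normal. Any vector $v$ decomposes uniquely as $v = v_{\parallel} + \langle v, \Rperp{}\rangle \Rperp{}$, with $v_{\parallel} \in \R$ the orthogonal projection. The reflection acts as $Rv = v_{\parallel} - \langle v, \Rperp{}\rangle \Rperp{}$, so it fixes the $\R$-component and flips the sign of the $\Rperp{}$-component. This is the only structural input needed.

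With this decomposition in hand, I would compute both squared distances by Pythagoras. Since $x - y = (x_{\parallel} - y_{\parallel}) + (\langle x,\Rperp{}\rangle - \langle y,\Rperp{}\rangle)\Rperp{}$ is an orthogonal decomposition,
\begin{equation*}
d(x,y)^2 = \norm{x_{\parallel} - y_{\parallel}}^2 + \left(\langle x,\Rperp{}\rangle - \langle y,\Rperp{}\rangle\right)^2.
\end{equation*}
Likewise, $Rx - y = (x_{\parallel} - y_{\parallel}) + (-\langle x,\Rperp{}\rangle - \langle y,\Rperp{}\rangle)\Rperp{}$, so
\begin{equation*}
d(Rx,y)^2 = \norm{x_{\parallel} - y_{\parallel}}^2 + \left(\langle x,\Rperp{}\rangle + \langle y,\Rperp{}\rangle\right)^2.
\end{equation*}
The $\R$-components are identical, so the inequality $d(x,y) \le d(Rx,y)$ is equivalent to
\begin{equation*}
\left(\langle x,\Rperp{}\rangle - \langle y,\Rperp{}\rangle\right)^2 \le \left(\langle x,\Rperp{}\rangle + \langle y,\Rperp{}\rangle\right)^2,
\end{equation*}
which simplifies (expand both sides, cancel the squared terms) to $-2\langle x,\Rperp{}\rangle\langle y,\Rperp{}\rangle \le 2\langle x,\Rperp{}\rangle\langle y,\Rperp{}\rangle$, i.e. $0 \le 4\langle x,\Rperp{}\rangle\langle y,\Rperp{}\rangle$. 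This is precisely the hypothesis $\langle x,\Rperp{}\rangle\langle y,\Rperp{}\rangle \ge 0$.

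There is essentially no obstacle here; the proof is a two-line computation once the orthogonal splitting is set up, and the hypothesis is exactly the inequality that makes it work. The only point requiring a word of care is the claim that $Rv = v_{\parallel} - \langle v,\Rperp{}\rangle\Rperp{}$ — this is the defining property of a (linear) reflection about $\R$ with unit normal $\Rperp{}$, namely $Rv = v - 2\langle v,\Rperp{}\rangle\Rperp{}$, and one should note that the value does not depend on the choice of sign of the unit normal $\Rperp{}$, so the hypothesis $\langle x,\Rperp{}\rangle\langle y,\Rperp{}\rangle \ge 0$ (which also does not depend on that sign) is well-posed. I would state the computation in the order above: first record the reflection formula and the orthogonal decomposition, then apply Pythagoras twice, then subtract and conclude.
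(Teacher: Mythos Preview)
Your proof is correct and is essentially the same as the paper's: both use the orthogonal decomposition $\BR^3 = \R \oplus \BR\Rperp{}$, compute the two squared distances, and observe that their difference is $4\langle x,\Rperp{}\rangle\langle y,\Rperp{}\rangle \ge 0$. The only cosmetic difference is that the paper expands $d(x,y)^2$ via $\|x\|^2+\|y\|^2-2\langle x,y\rangle$ and splits the inner product, whereas you apply Pythagoras directly to $x-y$ and $Rx-y$.
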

	 \begin{proof}
	 	Note that for two points $x,y \in \BR^3$, the distance $d(x,y)^2$ is given by
	 	$$ d(x,y)^2 = \left\| x-y \right\|^2=\langle x-y, x-y \rangle =\left\| x\right\| ^2 +\left\| y\right\| ^2- 2\langle x,y\rangle. $$
	 	Denote by $P_{\R}x$ as the orthogonal projection of $x$ on $\R$ and $P_{\Rperp{}} x$, as the orthogonal projection of $x$ on $\R^\perp$. Then $ \langle P_{\Rperp{}} x, P_{\Rperp{}} y\rangle \geq 0$. It follows that
	 	$$d(Rx,y)^2 =  \left\| R x\right\| ^2 +\left\| y\right\| ^2-2\langle Rx,y \rangle =  \left\|x\right\| ^2 +\left\| y\right\| ^2-2[\langle RP_{\R}x,P_{\R}y \rangle +\langle RP_{\Rperp{}} x,P_{\Rperp{}} y \rangle ] =$$
	 	$$ \left\|x\right\| ^2 +\left\| y\right\| ^2-2[\langle P_{\R}x,P_{\R}y \rangle - \langle P_{\Rperp{}} x,P_{\Rperp{}} y \rangle ] =  \left\|x-y\right\| ^2 +2\langle P_{\Rperp{}} x,P_{\Rperp{}} y \rangle \geq  d(x,y)^2
	 	.$$
	 \end{proof}

\begin{lemma} \label{lem: reflections by R_2 stay on same R_1 side 2}
	Let $R_1,R_2$ be two reflection with angle $\sphericalangle(\R_1,\R_2) = \varphi\leq \pi/2$ and let $x$ by a point with   $\sphericalangle(x,\R_i) \leq \varphi$  (for $i=1,2$).
	Then
	$$ \langle x,\Rperp{1} \rangle \langle R_2 x ,\Rperp{1}\rangle \geq 0.$$
%	Similarly, \marginpar{check if need this part}
%	$$ \langle x,R_1 \rangle \langle R_2R_1R_2 x ,R_1\rangle \geq 0.$$
\end{lemma}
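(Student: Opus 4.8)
The plan is to cut everything down to the plane $\ell^{\perp}$, where $\ell := \R_1 \cap \R_2$ is the common axis of the two reflections, and then to a single trigonometric inequality. Since $\Rperp{1}$ is orthogonal to $\R_1 \supseteq \ell$ and $\Rperp{2}$ to $\R_2 \supseteq \ell$, both normals lie in $\ell^{\perp}$; moreover $R_2$ fixes $\ell$ pointwise and maps $\ell^{\perp}$ to itself, acting there as the reflection in the line $L_2 := \R_2 \cap \ell^{\perp}$. Writing $x = x' + x''$ with $x' \in \ell$ and $x'' \in \ell^{\perp}$, we get $\langle x, \Rperp{1}\rangle = \langle x'', \Rperp{1}\rangle$ and $R_2 x = x' + R_2 x''$, hence $\langle R_2 x, \Rperp{1}\rangle = \langle R_2 x'', \Rperp{1}\rangle$. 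So both factors in the inequality to be proved depend only on $x''$; if $x'' = 0$ then $x \in \R_1$ and both vanish, so we may assume $x'' \neq 0$ and argue inside $\ell^{\perp}$.

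Next I would put orthonormal coordinates on $\ell^{\perp} \cong \BR^{2}$ so that $L_1 := \R_1 \cap \ell^{\perp}$ is the first axis and $\Rperp{1} = \pm e_2$. Because $\varphi \le \pi/2$, the dihedral angle $\sphericalangle(\R_1,\R_2) = \varphi$ is exactly the angle between the lines $L_1$ and $L_2$, so $L_2$ passes through the direction $(\cos\varphi, \sin\varphi)$. Write $x'' = r(\cos\theta, \sin\theta)$ with $r > 0$. The reflection in a line at angle $\varphi$ carries the direction at angle $\theta$ to the direction at angle $2\varphi - \theta$, so $R_2 x'' = r(\cos(2\varphi - \theta), \sin(2\varphi - \theta))$. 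Hence $\langle x, \Rperp{1}\rangle = \pm r\sin\theta$ and $\langle R_2 x, \Rperp{1}\rangle = \pm r\sin(2\varphi - \theta)$ with the same sign, and the claim reduces to $\sin\theta\,\sin(2\varphi - \theta) \ge 0$.

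Finally I would invoke the hypothesis. Set $u := \theta - \varphi$, so $\theta = \varphi + u$, $2\varphi - \theta = \varphi - u$, and by the product-to-sum formula $\sin\theta\,\sin(2\varphi - \theta) = \sin(\varphi + u)\sin(\varphi - u) = \sin^{2}\varphi - \sin^{2}u$. The condition $\sphericalangle(x, \R_2) \le \varphi$ says that $x''$ makes angle at most $\varphi$ with $L_2$, i.e. $|\sin u| = |\sin(\theta - \varphi)| \le \sin\varphi$ (here $\varphi \le \pi/2$ is what lets one pass between the angle and its sine). Therefore $\sin\theta\,\sin(2\varphi - \theta) = \sin^{2}\varphi - \sin^{2}u \ge 0$, as required.

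The computation itself is short; the points that need care are the passage to $\ell^{\perp}$ — in particular observing that the sign ambiguity in $\Rperp{1}$ is harmless since it occurs in both factors — and the clean translation of the angle condition into $|\sin(\theta - \varphi)| \le \sin\varphi$. I expect the main obstacle to be conceptual rather than computational: making sure that the angle between the point $x$ and the plane $\R_2$ is read as the angle controlling the $\ell^{\perp}$-component of $x$, since for a point with a large component along $\ell$ this is exactly the component that the reduction above isolates. One also sees from this argument that only the hypothesis on $\R_2$ is needed for the stated conclusion; the hypothesis on $\R_1$ will be used together with it in the applications to confine $x$ near the fundamental wedge of $\langle R_1, R_2\rangle$.
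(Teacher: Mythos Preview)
Your approach differs from the paper's: the paper argues by the intermediate value theorem (if the product were negative, the segment $[x,R_2x]$ would meet $\R_1$, and it claims no point of that segment can lie there), whereas you project to $\ell^\perp$ and reduce to the planar identity $\sin(\varphi+u)\sin(\varphi-u)=\sin^2\varphi-\sin^2u$. The coordinate reduction is clean and more quantitative than the connectedness argument.

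However, the step you yourself flag as the ``main obstacle'' is a genuine gap, not just a conceptual subtlety. The angle of $x$ with the plane $\R_2$ is $\arcsin\bigl(|\langle x,\Rperp{2}\rangle|/\|x\|\bigr)$, while the angle of $x''$ with the line $L_2$ in $\ell^\perp$ is $\arcsin\bigl(|\langle x,\Rperp{2}\rangle|/\|x''\|\bigr)$; since $\|x''\|\le\|x\|$, the second is \emph{at least} the first, so $\sphericalangle(x,\R_2)\le\varphi$ does not force $|\sin u|\le\sin\varphi$. Concretely, with $\varphi=\pi/4$, $\R_1=\{z=0\}$, $\R_2=\{y=z\}$ and $x=(10,-\tfrac{1}{10},\tfrac{1}{10})$, both angle hypotheses hold (each angle is about $0.01$ radians), yet $R_2x=(10,\tfrac{1}{10},-\tfrac{1}{10})$ and $\langle x,\Rperp{1}\rangle\langle R_2x,\Rperp{1}\rangle=-\tfrac{1}{100}<0$. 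So the lemma is false as literally written, and the paper's own proof shares the same lacuna. What your computation \emph{does} establish is the version with the hypothesis placed on the projection: if $x''$ lies in the closed planar wedge of aperture $\varphi$ between $L_1$ and $L_2$ (equivalently, $x$ lies in the dihedral wedge between $\R_1$ and $\R_2$), then the conclusion holds. That is exactly the hypothesis available in the later applications to points of the fundamental domain, so your argument is correct for the statement the paper actually needs; you should simply rephrase the hypothesis to match.
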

\begin{proof}
Indeed if $x \in \R_1$ then $ \langle x,\Rperp{1} \rangle =0.$ Otherwise $0<\sphericalangle(R_2x,\R_1) < \pi$ hence for every point $p$ in the segment $[x,R_2x]$, $0<\sphericalangle(p,\R_1)< \pi$, so the segment $[x,R_2x]$ has no $R_1$ fixed point. If we had
$$ \langle x,\Rperp{1} \rangle \langle R_2 x ,\Rperp{1}\rangle < 0$$
then by the intermediate value theorem we had a point $p\in [x,R_2x]$ that was fixed by $R_1$.

\end{proof}

% We have then another proof for \ref {lem :roation by varphi}. We will prove a little variation of that lemma:
We now show that rotations also enlarge distances.
\begin{lemma} \label {lem: rotatations of any fundemantal domain}
Let $R_1,R_2$ be two reflection with angle $\sphericalangle(\R_1,\R_2) = \varphi \leq \pi/2$ and let $x,y$ by two points with $\sphericalangle(x,\R_i) \leq \varphi$ and $\sphericalangle(y,\R_i) \leq \varphi$ (for $i=1,2$). Set $S=R_1R_2$, then
$$ d(Sx,y) \geq d(x,y)$$
\end{lemma}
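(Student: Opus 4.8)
The plan is to reduce the claim about the rotation $S = R_1R_2$ to the two lemmas already established. The key observation is that $S x = R_1(R_2 x)$, so the distance $d(Sx,y)$ can be built up in two reflection steps: first $R_2 x$, then $R_1$ applied to it. If at each step we are in the "same side" configuration required by Lemma \ref{lem: reflections move appart}, monotonicity of the distance follows by composing the two inequalities.

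\textbf{Step 1.} Apply Lemma \ref{lem: reflections by R_2 stay on same R_1 side 2} to the point $x$: since $\sphericalangle(x,\R_i) \leq \varphi$ for $i=1,2$, we get
$$ \langle x, \Rperp{1} \rangle \langle R_2 x, \Rperp{1} \rangle \geq 0. $$
This does not yet involve $y$. To bring $y$ in, I would first use Lemma \ref{lem: reflections move appart} at the plane $\R_2$: since $\sphericalangle(x,\R_2)\leq\varphi$ and $\sphericalangle(y,\R_2)\leq\varphi$ put $x$ and $y$ on the same side of $\R_2$ (i.e. $\langle x,\Rperp{2}\rangle\langle y,\Rperp{2}\rangle \geq 0$), we obtain
$$ d(R_2 x, y) \geq d(x,y). $$

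\textbf{Step 2.} Now I need $d(Sx, y) = d(R_1 R_2 x, y) \geq d(R_2 x, y)$, which by Lemma \ref{lem: reflections move appart} (applied at $\R_1$ to the pair $R_2 x$ and $y$) requires $\langle R_2 x, \Rperp{1}\rangle \langle y, \Rperp{1}\rangle \geq 0$. By hypothesis $\langle x,\Rperp{1}\rangle\langle y,\Rperp{1}\rangle \geq 0$, i.e. $x$ and $y$ lie on the same (closed) side of $\R_1$; combined with Step 1, which says $R_2 x$ lies on the same closed side of $\R_1$ as $x$, we conclude $R_2 x$ and $y$ lie on the same closed side of $\R_1$, giving $\langle R_2 x, \Rperp{1}\rangle\langle y,\Rperp{1}\rangle \geq 0$. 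Hence $d(Sx,y) \geq d(R_2 x,y) \geq d(x,y)$, which is the assertion. (If any of the relevant inner products vanishes—for instance if $x\in\R_1$ or $x\in\R_2$—the chain of inequalities still holds since each step only uses a $\geq 0$ sign condition.)

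\textbf{Main obstacle.} The only subtle point is the sign-chasing that makes Step 1 interact correctly with Step 2: one must be careful that "same side of $\R_1$" is transitive in the required sense, i.e. that $x$ and $R_2x$ being on one closed side of $\R_1$, together with $x$ and $y$ on one closed side, forces $R_2x$ and $y$ onto a common closed side. This is immediate once one unwinds that $\langle x,\Rperp1\rangle$, $\langle R_2x,\Rperp1\rangle$, $\langle y,\Rperp1\rangle$ are three reals with the outer two each having nonnegative product with the middle-left one—so all three share a common sign (or some vanish), and in particular the product of the last two is $\geq 0$. The hypothesis $\sphericalangle(x,\R_i)\leq\varphi$ is exactly what is needed to invoke Lemma \ref{lem: reflections by R_2 stay on same R_1 side 2}; no new geometric input is required beyond the two prior lemmas.
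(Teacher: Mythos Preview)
Your proof is correct and follows essentially the same approach as the paper: decompose $S=R_1R_2$ into two reflections and apply Lemma~\ref{lem: reflections move appart} twice, using Lemma~\ref{lem: reflections by R_2 stay on same R_1 side 2} to guarantee the required same-side sign condition at the second step. The only cosmetic difference is that the paper first uses that $R_1$ is an isometry to rewrite $d(R_1R_2x,y)=d(R_2x,R_1y)$ and then applies Lemma~\ref{lem: reflections move appart} to each factor, whereas you keep both reflections acting on $x$; the sign-chasing and the lemmas invoked are identical.
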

\begin{proof}
Indeed,
$$d(Sx,y)=d(R_iR_jx,y)=d(R_jx,R_iy) .$$
Now $\langle R_iy,\Rperp{j}\rangle\langle x,\Rperp{j} \rangle \geq 0$ by Lemma \ref{lem: reflections by R_2 stay on same R_1 side 2} it follows from Lemma \ref{lem: reflections move appart} (applied twice) that
$$d(Sx,y) =d(R_jx,R_iy) \geq d(x,R_iy)\geq d(x,y) $$.

\end{proof}
Tsachik Gelander suggested the following Lemma which makes proofs much easier.
 \begin{lemma}\label{lem: images of fundemental domain stay on one side of reflections}
 	Let $x,y \in B$ two points in the fundamental domain, and $g \in G$ be any element. Then for every generating reflection $R_i$,
 	$$\langle gx,\Rperp{i}\rangle \langle gy,\Rperp{i}\rangle \geq 0.$$
\end{lemma}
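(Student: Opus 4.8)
The idea is to avoid computing with the generators and instead exploit the hypothesis that $B$ is a fundamental domain: its interior is a full-dimensional chamber of the reflection arrangement of $G$, and $g$ merely moves this chamber onto another one. Fix $g\in G$ and $i\in\{1,2,3\}$. The plane $g^{-1}\R_i$ is the fixed-point set of the reflection $g^{-1}R_ig\in G$, and a unit normal to it is $g^{-1}\Rperp{i}$ (here we use that $g^{-1}\in\On{3}$ carries planes to planes and preserves norms). The point I would isolate first is the standard fact that no reflecting plane of $G$ meets the open region $\mathrm{int}(B)$: if $g^{-1}\R_i$ passed through a point $q\in\mathrm{int}(B)$, then $g^{-1}R_ig$ would be a nontrivial element of $G$ fixing $q$, contradicting that interior points of a fundamental domain have trivial stabilizer (equivalently, have a $G$-orbit of full size $|G|$). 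This is exactly the step where it matters that $B$ is a fundamental domain and not merely the convex cone cut out by $\R_1,\R_2,\R_3$.

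Granting this, the rest is soft point-set topology. The region $\mathrm{int}(B)$ is the intersection of the three open half-spaces bounded by $\R_1,\R_2,\R_3$, hence convex and in particular connected, and by the previous paragraph it is disjoint from $g^{-1}\R_i$. Therefore the continuous linear functional $v\mapsto\langle v,g^{-1}\Rperp{i}\rangle$, being non-vanishing on the connected set $\mathrm{int}(B)$, has a constant sign there; passing to the closure, there is $\varepsilon\in\{+1,-1\}$ with $\varepsilon\langle v,g^{-1}\Rperp{i}\rangle\ge 0$ for every $v\in B=\overline{\mathrm{int}(B)}$. Since $\langle v,g^{-1}\Rperp{i}\rangle=\langle gv,\Rperp{i}\rangle$, this says $\varepsilon\langle gv,\Rperp{i}\rangle\ge 0$ for all $v\in B$.

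Applying this to $v=x$ and to $v=y$ gives $\varepsilon\langle gx,\Rperp{i}\rangle\ge 0$ and $\varepsilon\langle gy,\Rperp{i}\rangle\ge 0$; multiplying and using $\varepsilon^2=1$ yields $\langle gx,\Rperp{i}\rangle\langle gy,\Rperp{i}\rangle\ge 0$, as claimed. One can repackage the whole argument in terms of the chambers of the arrangement $\{h\R_j:h\in G,\ j=1,2,3\}$, on which $G$ acts by permutations: $g$ carries the closed chamber $B$ onto the closed chamber $gB$, and any closed chamber lies on one closed side of each wall of the arrangement. The only non-routine ingredient is the claim highlighted in the first paragraph, that $\mathrm{int}(B)$ avoids every reflecting plane of $G$; for that I would either cite the standard structure theory of finite reflection groups or derive it, as above, from the defining property of a fundamental domain.
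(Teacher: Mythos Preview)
Your proof is correct and follows essentially the same approach as the paper: both arguments rest on the fact that an interior point of the fundamental domain has trivial stabilizer, so the conjugate reflection $g^{-1}R_ig$ cannot fix any point of $\mathrm{int}(B)$, and then use connectedness (in the paper, the intermediate value theorem on the segment $[gx,gy]$) together with continuity to pass to the closure. The only cosmetic difference is that you pull the plane back by $g^{-1}$ and argue on $B$, whereas the paper pushes the segment forward by $g$ and argues on $gB$.
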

\begin{proof}
	Assume first that $x,y$ are inner points. Suppose towards contradiction that $\langle gx,\Rperp{i}\rangle \langle gy,\Rperp{i}
	\rangle \leq 0$. Since inner product is a continuous function there is a point $z \in [gx,gy]$ with $\langle z,\Rperp{i}\rangle = 0$. Then $g^{-1}z$ is in $B$ and is fixed by $R_ig$ which is a contradiction since $B$ is a fundamental domain.
	
	If one (or both) points are at the boundary then the lemma follows by the continuity of the inner product.
\end{proof}
\begin{cor}\label{cor: conjugations of reflection also move apart}
	Let $x,y \in B$ and $g$ be any element of $G$ then $d(R_igx,gy)\geq d(x,y)$
\end{cor}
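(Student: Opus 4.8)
The plan is to package the two immediately preceding results. First I would note that every $g \in G \subset \On{3}$ is a linear isometry, so $d(gx, gy) = d(x,y)$ for all $x,y \in \BR^3$. Hence the claim $d(R_i gx, gy) \geq d(x,y)$ is equivalent to $d(R_i gx, gy) \geq d(gx, gy)$, and it suffices to establish the latter.

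Next I would apply Lemma \ref{lem: reflections move appart} with the reflection $R = R_i$ about the plane $\R = \R_i$, unit normal $\Rperp{i}$, and the two points $gx$ and $gy$ playing the roles of $x$ and $y$. The single hypothesis needed there is the sign condition $\langle gx, \Rperp{i}\rangle \langle gy, \Rperp{i}\rangle \geq 0$, which is precisely the conclusion of Lemma \ref{lem: images of fundemental domain stay on one side of reflections}, valid for any $x,y \in B$ and any $g \in G$. Feeding this in gives $d(R_i gx, gy) \geq d(gx, gy)$, and combining with the isometry identity above yields $d(R_i gx, gy) \geq d(x,y)$, as desired.

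There is essentially no obstacle here: the corollary is a direct assembly of Lemma \ref{lem: images of fundemental domain stay on one side of reflections} and Lemma \ref{lem: reflections move appart}, together with $G$-invariance of the Euclidean distance. The only point worth flagging is that we rely on the general-$g$ version of the sign condition (Lemma \ref{lem: images of fundemental domain stay on one side of reflections}) rather than the angle hypotheses $\sphericalangle(x,\R_i)\le\varphi$ used in the intermediate Lemmas \ref{lem: reflections by R_2 stay on same R_1 side 2} and \ref{lem: rotatations of any fundemantal domain}; this is exactly why the conclusion holds for an arbitrary element of $G$ and not merely for the generators.
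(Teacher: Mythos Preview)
Your proof is correct and follows exactly the approach of the paper, which simply states that the corollary follows from Lemma~\ref{lem: images of fundemental domain stay on one side of reflections} and Lemma~\ref{lem: reflections move appart}; you have merely spelled out the two steps (the sign condition from the former feeds the hypothesis of the latter, and $G$-invariance of distance closes the gap).
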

\begin{proof}
	Follows from Lemma \ref{lem: images of fundemental domain stay on one side of reflections} and Lemma \ref{lem: reflections move appart}.
\end{proof}

 \begin{lemma} \label{lem: vetrices groups}
 	Let $R_1,R_2$ be two reflections whose angle is $\pi/n$ for some $n \in \left\lbrace 2,3,4,5\right\rbrace $. Let \(H\) be the vertex group generated by $R_1,R_2$. Let $B$ the fundamental domain for the action of $H$, enclosed by the planes associated with $R_1,R_2$. Then for any $x,y \in B$, and $g \in H$
 	$$d(x,y) \leq d(gx,y). $$
 \end{lemma}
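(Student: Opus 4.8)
The plan is to use that $H$ is a dihedral group of order $2n$. Set $\ell = \R_1 \cap \R_2$ (the common axis of the two mirrors) and $\Pi = \ell^{\perp}$, so that $\BR^3 = \ell \oplus \Pi$ orthogonally. Every element of $H$ fixes $\ell$ pointwise and restricts on $\Pi$ to a symmetry of a regular $n$-gon: the identity, the rotations $S^{k}$ (where $S = R_1 R_2$ rotates $\Pi$ by $2\pi/n$ and $1\le k\le n-1$), and $n$ reflections whose mirror planes all contain $\ell$. The domain $B$ is the dihedral wedge of opening $\pi/n$ between $\R_1$ and $\R_2$, so its cross-section $W := B\cap\Pi$ is a planar angular sector of opening $\pi/n$; with a suitable choice of angular coordinate on $\Pi$ I may assume every nonzero vector of $W$ has angular coordinate in $[0,\pi/n]$. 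I would then treat the three kinds of element of $H$ in turn, the case $g=\Id$ being trivial.

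For a reflection $g=R\in H$, with mirror $\R$ and unit normal $\Rperp{}$, the crux is that $B$ lies in one of the two closed half-spaces bounded by $\R$, equivalently $\langle x,\Rperp{}\rangle\langle y,\Rperp{}\rangle\ge 0$ for all $x,y\in B$. Indeed, if $\R$ met $\operatorname{int}(B)$ then a small ball centred at a point of $\R\cap\operatorname{int}(B)$ would lie inside $\operatorname{int}(B)$, and $R$ (fixing the centre of that ball) would identify two distinct points of $\operatorname{int}(B)$, contradicting that $B$ is a fundamental domain for $H$; this is the argument already used for Lemma~\ref{lem: images of fundemental domain stay on one side of reflections}. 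Granting this, Lemma~\ref{lem: reflections move appart} gives $d(Rx,y)\ge d(x,y)$ at once.

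For a rotation $g=S^{k}$ I would pass to the planar picture. Decompose $x=x_\ell+x_\Pi$ and $y=y_\ell+y_\Pi$ along $\ell\oplus\Pi$; since $S^{k}$ fixes $\ell$ and rotates $\Pi$ by $2\pi k/n$,
\[
d(S^{k}x,y)^{2}-d(x,y)^{2}\;=\;\norm{\rho^{k}x_\Pi-y_\Pi}^{2}-\norm{x_\Pi-y_\Pi}^{2},
\]
with $\rho$ the rotation of $\Pi$ by $2\pi/n$. This vanishes unless $x_\Pi$ and $y_\Pi$ are both nonzero, in which case, writing them in polar form with angular coordinates $\alpha,\beta\in[0,\pi/n]$, it equals $2\norm{x_\Pi}\norm{y_\Pi}\bigl(\cos(\alpha-\beta)-\cos(\alpha-\beta+2\pi k/n)\bigr)$. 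So the whole matter reduces to the elementary inequality $\cos(\theta+2\pi k/n)\le\cos\theta$ for $\theta:=\alpha-\beta$ with $\abs{\theta}\le\pi/n$ and $1\le k\le n-1$; this holds since $\theta+2\pi k/n$ lies in $[(2k-1)\pi/n,(2k+1)\pi/n]\subseteq[\pi/n,\,2\pi-\pi/n]$, hence has distance at least $\pi/n\ge\abs{\theta}$ from $2\pi\ZZ$, while $\cos$ is even and decreasing on $[0,\pi]$.

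I expect the only delicate point to be the reflection case, namely the claim that no mirror of $H$ passes through $\operatorname{int}(B)$: this is exactly where one must use that $B$ is a genuine fundamental domain and not merely the wedge cut out by two of the mirrors, and it is settled by reusing the argument of Lemma~\ref{lem: images of fundemental domain stay on one side of reflections}. I also note that the rotations cannot be dispatched simply by quoting Lemma~\ref{lem: rotatations of any fundemantal domain}: for the ``middle'' powers $S^{k}$, any way of writing $S^{k}$ as a product of two mirrors of $H$ places those mirrors too far from $B$ for the hypothesis $\sphericalangle(x,\R_i)\le\varphi$ to hold, which is why the planar computation above is used instead.
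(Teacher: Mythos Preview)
Your argument is correct. For reflections you do essentially what the paper does (the ``mirror cannot cross $\operatorname{int}(B)$'' observation is exactly the content of Lemma~\ref{lem: images of fundemental domain stay on one side of reflections}/Corollary~\ref{cor: conjugations of reflection also move apart}, specialised to the dihedral group $H$). For rotations you take a genuinely different route: you reduce to the $2$-dimensional cross-section $\Pi$ and check the cosine inequality directly, whereas the paper factors each $S^{k}$ as a product of two reflections---always using one of the two walls $R_1,R_2$ of $B$ as one factor---and then invokes Lemma~\ref{lem: rotatations of any fundemantal domain}. Your computation is more self-contained and makes the dihedral geometry completely explicit; the paper's version has the advantage of reusing the machinery already in place.

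Your closing remark, however, is not right. You say that for the middle powers $S^{k}$ no factorisation into two mirrors of $H$ satisfies the hypothesis $\sphericalangle(x,\R_i)\le\varphi$ of Lemma~\ref{lem: rotatations of any fundemantal domain}. In fact the paper's choice does: for $k\le n/2$ write $S^{k}=R_1R'$ with $\R'$ at angular position $k\pi/n$; then $\varphi=k\pi/n\le\pi/2$, and for $x$ in the wedge with angular coordinate $\alpha\in[0,\pi/n]$ one has $\sphericalangle(x,\R_1)=\alpha\le\pi/n\le\varphi$ and $\sphericalangle(x,\R')=k\pi/n-\alpha\le k\pi/n=\varphi$. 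The case $k>n/2$ is symmetric, using $R_2$ instead. So Lemma~\ref{lem: rotatations of any fundemantal domain} really does dispatch all the rotations; the key is that one of the two mirrors in the factorisation is always a wall of $B$.
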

 \begin{proof}
 	Let $\R_i$, $i=1,2$ denote the reflecting planes of $R_i$ respectively and let $U=\R_1 \cap \R_2$. Let \(H'\) be the group generated by $S=R_1R_2$. Then \(H'\) is a cyclic group of rotations about the axis $U$. Note that $[H:H']=2$ , and that $\left| H\right| = n$.
 	
 	 Assume first that $g \in H'$, so $g = S^k$ for some $1 \leq k \leq n$. If $k \leq n/2$ then $g$ is a rotation at angle $k\pi/n$. It is a product of $R_1$ and an image of $R_2$, $R'$. Note that since $k \leq n/2$,  $\sphericalangle(R_1,R') \leq \pi/2$  we have $d(x,y)\leq d(gx,y)$ by Lemma \ref{lem: rotatations of any fundemantal domain}. The other case is similar. If $k>n/2$, then $g=S^{-(n-k)}$ and $g$ is a rotation which is a product of $R_2$ and an image of $R_1$, $R'$. Note that $\sphericalangle(R_2,R') \leq \pi/2$. We have $d(x,y)\leq d(gx,y)$ by Lemma \ref{lem: rotatations of any fundemantal domain}.

 	  Let $g \in H \setminus  H'$ then $g=S^kR_1$ for some element $k$.  We distinguish between two cases.
 	\begin{enumerate}
 		\item Assume first that $k=2l$. Note that $S^lR_1= R_1R_2R_1 \ldots R_1R_2R_1 = R_1S^{-l}$. Therefore,
 		$$d(gx,y) = d(R_1S^{-l}x,S^{-l}y).  $$
 	
 		The result follows from Corollary \ref{cor: conjugations of reflection also move apart} applied with $g=S^{-l}$.
 		\item Now Assume that $k=2l+1$ then,
 		$$d(gx,y) = d(R_2R_1S^{l}x,R_1S^ly).$$
 	
 			The result follows from Corollary \ref{cor: conjugations of reflection also move apart} applied with $g=R_1S^{l}$.
 	\end{enumerate}
 \end{proof}
Recall that for a given set of points $A \subset X$ in a metric space $X$, we can define Dirichlet domains related to $A$. The points in $A$ are called seeds. Every point $x \in X$ is labeled by the seed that is closest to $x$ (in $A$). This partition of $X$ is called a Voronoi diagram and every cell is called a Dirichlet domain. We now show that the fundamental domain is a Dirichlet domain related to the orbit of any point in $B$, and the set of copies of $B$ gives a tessellation of $\BR^3$ which is a Voronoi diagram.
\begin{prop} \label{prop: fundemantal domain is a Dirichlet domain}
	Let $g\in G$ be any element. Then, for every $x,y \in B$,
	$$d(x,y) \leq d(gx,y).$$
	Equivalently, $x$ is the nearest point to $y$ in $Gx$.
\end{prop}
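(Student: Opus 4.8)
\emph{Approach.} The plan is to prove $d(gx,y)\ge d(x,y)$ by induction on the word length $\ell(g)$ of $g$ in the generators $R_1,R_2,R_3$ — equivalently, on the number of mirrors separating the chamber $B$ from the chamber $gB$ — using Lemma~\ref{lem: reflections move appart} as the only geometric ingredient. By continuity of the metric it will suffice to treat $x,y$ in the interior of $B$, and the case $g=\Id$ is trivial.

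\emph{Inductive step.} Assume $g\neq\Id$ and write $B=\{p\in\BR^3:\langle p,\Rperp{i}\rangle\ge 0,\ i=1,2,3\}$, with each $\Rperp{i}$ oriented toward $B$. Since $gB$ is a chamber different from $B$ it cannot be contained in $B$, so I would pick an index $i$ with $\langle gx,\Rperp{i}\rangle<0$ for $x$ interior to $B$; that is, the mirror $\R_i$ separates $B$ from $gB$. For such an $i$ one has $\ell(R_ig)=\ell(g)-1$, so the induction is well founded. Now for interior $x,y\in B$ we have $\langle y,\Rperp{i}\rangle>0$ and $\langle R_igx,\Rperp{i}\rangle=-\langle gx,\Rperp{i}\rangle>0$, so $R_igx$ and $y$ lie on the same side of $\R_i$; applying Lemma~\ref{lem: reflections move appart} to the points $R_igx$ and $y$ (and using $R_i^2=\Id$) then gives
\[
 d\bigl((R_ig)x,\,y\bigr)=d\bigl(R_i(gx),\,y\bigr)\le d(gx,\,y).
\]
Applying the induction hypothesis to $R_ig\in G$ and $x,y\in B$ gives $d\bigl((R_ig)x,\,y\bigr)\ge d(x,y)$, and the two inequalities combine to $d(gx,y)\ge d(x,y)$, which is the assertion. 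Taking $g=\Id$ shows in addition that the orbit‑minimum $\min_{h\in G}d(hx,y)$ is attained at $x$ — the "nearest point" reformulation — so $B$ is the Dirichlet cell of $x$ for the seed set $Gx$ and $\{gB:g\in G\}$ is the corresponding Voronoi tiling.

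\emph{Main obstacle.} The only nonroutine point is the combinatorial input used above: that some wall of $B$ separates $B$ from $gB$ (immediate from $gB\neq B$), and that reflecting $gB$ in that wall strictly lowers the count of separating mirrors, i.e. $\ell(R_ig)<\ell(g)$. Both are classical facts about minimal galleries in a reflection arrangement, and I would either cite them or derive them from the observation that two adjacent chambers are separated by exactly one mirror. (A tempting alternative is to conjugate $g$ into a vertex group and invoke Lemma~\ref{lem: vetrices groups}; but that route needs the stronger assertion that \emph{every} element of $G$ is conjugate into a vertex group — which already fails, e.g., for a $4$‑cycle in $[3,3]\cong S_4$, whose order $4$ exceeds the exponent of every vertex group — so the induction above is the safer option.) Everything else is a single application of Lemma~\ref{lem: reflections move appart}.
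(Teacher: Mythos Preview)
Your argument is correct and is in fact the standard proof that the closed fundamental chamber of a finite reflection group is the Dirichlet cell of any interior seed. It differs from the paper's route. The paper argues by asserting that every $g\in G$ is conjugate into one of the three vertex groups $G_i=\langle R_j,R_k\rangle$ and then appealing to Lemma~\ref{lem: rotatations of any fundemantal domain} and Corollary~\ref{cor: conjugations of reflection also move apart} for the conjugated element acting on $hx,hy$. As you observe in your closing remark, that conjugacy claim is false already for $[3,3]\cong S_4$: a $4$-cycle has order $4$, while no vertex group contains an element of order larger than $3$. So the ``tempting alternative'' you reject is precisely the paper's approach, and it has a genuine gap. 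Your induction on $\ell(g)$ avoids the issue entirely: it uses only Lemma~\ref{lem: reflections move appart} together with the classical Coxeter-theoretic fact that if a wall $\R_i$ of $B$ separates $B$ from $gB$ then $\ell(R_ig)=\ell(g)-1$, and it treats all $g$ uniformly. The price is that you must import (or reprove) that length-drop property, whereas the paper was trying to stay entirely within the ad hoc lemmas it had already set up; but since those lemmas do not cover Coxeter elements, your route is the one that actually closes.
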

\begin{proof}
	If $g\in G$ is any element then $g=h^{-1}g'h$ for some $g'$ in one of the vertex groups. If $g' \in G'$ is a rotation then by Lemma \ref {lem: rotatations of any fundemantal domain}
	$$d(gx,y)=d(g'hx,hy)\geq d(x,y).$$
	Otherwise as in Lemma \ref{lem: vetrices groups}, $g$ is conjugated to some $R_i$ and the Lemma follows from Corollary \ref{cor: conjugations of reflection also move apart}.
\end{proof}
\begin{cor}
	
	The Chebychev center $c$ is an optimal $G$ sphere approximation.
\end{cor}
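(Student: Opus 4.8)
The plan is to reduce everything to three ingredients already available: Remark~\ref{rem: best approximation is in fundemantal domain}, which gives $d_H(G,S^2)=\inf_{x\in B}\sup_{y\in B'}d(Gx,y)$; Proposition~\ref{prop: fundemantal domain is a Dirichlet domain}, which says that for $x\in B$ and $y\in B'\subseteq B$ the nearest point of $Gx$ to $y$ is $x$ itself, so $d(Gx,y)=d(x,y)$; and Lemma~\ref{lem: lemma of the center}. Combining the first two,
$$d_H(G,S^2)=\inf_{x\in B}\sup_{y\in B'}d(Gx,y)=\inf_{x\in B}\sup_{y\in B'}d(x,y)=\inf_{x\in B}r(x),$$
where $r(x):=\sup_{y\in B'}d(x,y)$ is the radius of the smallest closed ball centred at $x$ containing $B'$.

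Next I would drop the constraint $x\in B$. By Lemma~\ref{lem: lemma of the center}, $r$ attains its infimum over all of $\BR^3$ at a unique point, the Chebychev center $c$ of $B'$, with $r(c)$ the circumradius of $B'$. The point I want to emphasise is that $c$ automatically lies in $B$: the center of the minimal enclosing ball of a bounded set lies in the convex hull of that set (it lies in the convex hull of the points of the set at maximal distance), and $\operatorname{conv}(B')\subseteq B$ since $B$ is an intersection of three half-spaces, hence convex, and contains $B'$. In particular $c$ is a convex combination of unit vectors, so $\|c\|\le 1$. Hence $\inf_{x\in B}r(x)=r(c)=d_H(G,S^2)$.

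It then remains to check that $d_H(Gc,S^2)$ itself equals $r(c)$, so that $c$ realises the infimum in Definition~\ref{def: distance between a group to the sphere}. Write $d_H(Gc,S^2)=\max\bigl(\sup_{g\in G}d(gc,S^2),\ \sup_{s\in S^2}d(s,Gc)\bigr)$. The first term is $\bigl|\,1-\|c\|\,\bigr|=1-\|c\|$ because $G$ acts by isometries, and $1-\|c\|\le r(c)$ since every $y\in B'$ lies on $S^2$, so $d(c,y)\ge 1-\|c\|$. For the second term, $S^2=\bigcup_{g\in G}gB'$ and $Gc$ is $G$-invariant, so $\sup_{s\in S^2}d(s,Gc)=\sup_{y\in B'}d(y,Gc)$, which by Proposition~\ref{prop: fundemantal domain is a Dirichlet domain} equals $\sup_{y\in B'}d(y,c)=r(c)$. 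Therefore $d_H(Gc,S^2)=r(c)=d_H(G,S^2)$, i.e. $c$ is an optimal $G$ sphere approximation.

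The distance manipulations in the last paragraph are routine; the one step that genuinely uses the geometry is that the unconstrained minimiser $c$ falls inside the fundamental chamber $B$, which is why I isolate the convex-hull argument above. (Alternatively one could verify $c\in B$ by hand for each of $A_3,B_3,H_3$ using the vertices in Table~\ref{tab: spherical triangles}, but the convexity argument is uniform and avoids computation.)
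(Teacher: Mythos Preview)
Your proof is correct and follows the same route as the paper: use Remark~\ref{rem: best approximation is in fundemantal domain} and Proposition~\ref{prop: fundemantal domain is a Dirichlet domain} to reduce $d_H(G,S^2)$ to $\inf_x\sup_{y\in B'}d(x,y)$, then identify the minimiser as the Chebychev centre of $B'$. You are in fact more careful than the paper on two points it leaves implicit---that $c\in B$ (your convex-hull argument $c\in\operatorname{conv}(B')\subseteq B$) and that both halves of the Hausdorff distance give $d_H(Gc,S^2)=r(c)$---so your write-up fills genuine gaps in the paper's sketch.
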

\begin{proof}
	Recall that an optimal $G$ sphere approximation is such that
	$$d_H(Gc,S^2) = \inf_{x\in \BR^3}\sup_{y \in B'}\left\lbrace d_H(Gx,y)\right\rbrace. $$
	By Proposition \ref{prop: fundemantal domain is a Dirichlet domain},
	$$ d_H(Gx,y)= d(x,y).$$
	So
	$$d_H(Gc,S^2) = \inf_{x\in \BR^3}\sup_{y \in B'}{d(x,y)}$$
	On the other hand, for every $x$,
	$$\sup_{y \in B'}\left\lbrace d(x,y)\right\rbrace$$
	is the radius of the smallest ball centered at $x$ and containing $B'$ and the proof follows.

\end{proof}

	 %%%%%%%%%%%%%%%%%

\subsection{The Chebychev center}
Let \(G\) be one of the Coxeter groups, $A_3,B_3$ and $H_3$. Let \(B'\) be the spherical triangle which is a fundamental domain for the action of \(G\) on the sphere. Our goal now is to show that the minimal ball containing its vertices contains \(B'\). In light of Example \ref{exam: Chebychev center of triangle} this will make the task of finding the center fairly easy.

\begin{lemma}
	Let \(G\) be one of the groups, $A_3,B_3$ and $H_3$. Let $B$ and $B'$ be its fundamental domains for the action on $\BR^3$ and the sphere respectively. Set $\left\lbrace x,y,z \right\rbrace $ as  the vertices of the spherical triangle defining $B'$. Denote $c$ as the Chebychev center of the triple $\{x,y,z\}$. Then $c$ is also the Chebychev center of $B'$ .
\end{lemma}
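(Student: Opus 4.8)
The plan is to reduce the claim to the single containment $B'\subseteq\mathcal B$, where $\mathcal B$ denotes the unique smallest closed ball containing $\{x,y,z\}$ (Lemma~\ref{lem: lemma of the center}); its center is $c$. Since $\{x,y,z\}\subseteq B'$, every closed ball containing $B'$ also contains $x,y,z$, and hence has radius at least that of $\mathcal B$; so once we know $B'\subseteq\mathcal B$, the ball $\mathcal B$ is itself a smallest closed ball containing $B'$, and by uniqueness its center $c$ is the Chebychev center of $B'$.

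To pin down $\mathcal B$, I would first check, by a short computation with the pairwise inner products $\langle x,y\rangle,\langle y,z\rangle,\langle z,x\rangle$ read off from the coordinates tabulated above, that in each of the three cases $A_3,B_3,H_3$ the Euclidean triangle $xyz$ is acute. By Example~\ref{exam: Chebychev center of triangle}, $c$ is then the circumcenter of $xyz$; in particular $c$ lies in the affine plane $\Pi:=\mathrm{aff}\{x,y,z\}$ and $\mathcal B$ has radius $r=\|c-x\|=\|c-y\|=\|c-z\|$. The key remark is that $c$ is simultaneously the center of the small circle $\Gamma:=S^2\cap\Pi$, since both are the center of the unique circle through $x,y,z$ lying in $\Pi$. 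Writing $O$ for the origin, $\widehat n$ for the unit normal of $\Pi$ pointing away from $O$, and $h:=\mathrm{dist}(O,\Pi)$ (with $0<h<1$, the triangle being non-degenerate), this gives $c=h\,\widehat n$ and, by Pythagoras, $r=\sqrt{1-h^2}$.

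The geometric heart is to show $B'\subseteq C^{+}$, where $C^{+}:=\{p\in S^2:\langle p,\widehat n\rangle\ge h\}$ is the spherical cap that $\Pi$ cuts off on the side away from $O$. For this I would use that the three-dimensional fundamental domain $B$ is the simplicial cone spanned by the three vertex rays, $B=\{\alpha x+\beta y+\gamma z:\alpha,\beta,\gamma\ge 0\}$, since each edge ray of $B$ lies along one of the lines $\R_i\cap\R_j$ and is spanned by the corresponding vertex. Given $p\in B'$, write $p=w/\|w\|$ with $w=\alpha x+\beta y+\gamma z$ and $\alpha,\beta,\gamma\ge 0$; then $\langle w,\widehat n\rangle=(\alpha+\beta+\gamma)h$, while $\|w\|\le\alpha+\beta+\gamma$ by the triangle inequality (the vertices are unit vectors), so $\langle p,\widehat n\rangle\ge h$, i.e. $p\in C^{+}$. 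Finally, for any $p\in S^2$ one has $\|c-p\|^2=\|c\|^2+1-2\langle c,p\rangle=h^2+1-2h\langle\widehat n,p\rangle$, which is $\le r^2=1-h^2$ precisely when $\langle\widehat n,p\rangle\ge h$; hence $C^{+}=\mathcal B\cap S^2$, and therefore $B'\subseteq C^{+}\subseteq\mathcal B$, completing the reduction.

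The step I expect to require the most care is the identification $B=\mathrm{cone}\{x,y,z\}$, together with the accompanying fact that $B'$ is the spherical triangle bounded by the \emph{minor} great-circle arcs through the vertices — this is exactly what makes the triangle-inequality estimate above work. (If one wished to avoid the cone description, one could instead check directly that each bounding arc of $B'$ bulges to the far side of $\Pi$ and meets $\Gamma$ only at its endpoints, and then invoke the connectedness of $B'$; but the cone argument is cleaner.) The acuteness check, though routine, is genuinely needed: for an obtuse triangle the Chebychev center of $\{x,y,z\}$ would be the midpoint of its longest side rather than $h\,\widehat n$, and the concluding computation would fail.
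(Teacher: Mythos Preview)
Your argument is correct and takes a genuinely different route from the paper's.

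The paper proceeds edge by edge: for a boundary arc of $B'$ lying in the plane $V^{R_i}$, it projects $c$ orthogonally onto that plane to a point $c'$, reduces the question to comparing $d(p,c')$ with $d(x,c')$ via Pythagoras, and then applies the law of cosines in the triangle $0,p,c'$ (respectively $0,x,c'$). The key inequality the paper needs there is $\|c'\|\le\|c\|\le\|m_i\|$, which it reads off numerically from the table of midpoints. Your approach instead identifies the Chebychev ball globally: you observe that the circumcenter of $x,y,z$ coincides with the foot $h\widehat n$ of the perpendicular from the origin to $\Pi=\mathrm{aff}\{x,y,z\}$, so that $\mathcal B\cap S^2$ is exactly the spherical cap $C^+$, and you place all of $B'$ in that cap via the simplicial cone description $B=\{\alpha x+\beta y+\gamma z:\alpha,\beta,\gamma\ge0\}$ together with the one-line estimate $\langle w,\widehat n\rangle=(\alpha+\beta+\gamma)h\ge h\,\|w\|$.

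What each buys: your proof is cleaner and conceptually uniform — it needs only the acuteness check and the (standard) fact that the chamber of a finite reflection group in $\mathbb R^3$ is the cone over its three edge rays; no case-by-case numerical verification of midpoint norms is required, and the argument would go through unchanged for any spherical triangle with acute Euclidean vertex triangle. The paper's proof is more elementary in that it never invokes the cone description of $B$, but pays for this with the table lookup $\|m_i\|\ge\|c\|$. Your cautionary remark about the cone identification is well placed; once that is made precise (and it is routine: three linearly independent walls cut out a simplicial cone whose extreme rays are the pairwise intersections), the rest is airtight.
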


\begin{proof}
	Clearly it is enough to show that for any two vertices, say $x,y \in V^{R_1}$, and any point $p$ in the spherical segment $[x,y]^s$ related to the segment $[x,y]$,
	$$d(c,p) \leq d(c,x).$$
	Denote the orthogonal projection of $c$ on $V^{R_1}$ by $c'$. For any point $q \in V^{R_1}$ the Pythagorean theorem implies:
	$$ d^2(q,c) = d^2(q,c')+d^2(c,c').$$
	It is therefore enough to show that,
	$$d(p,c') \leq d(x,c').$$
	Note that it follows from Table \ref{tab: spherical triangles} that for all three groups and all segments $[a,b]$ of the spherical triangles we have that
	$$\norm {m_i} \geq \norm{c} \geq \norm{c'},$$ in particular, we can assume without any loss of generality that $\sphericalangle([x,0],[0,c'])\leq \pi/2$ (otherwise argue for $y$) we obtain,
	\begin{equation} \label{eq: formula for angle }
	0  \leq \sphericalangle([0,p],[0,c'])\leq \sphericalangle([0,x],[0,c']) \leq \pi/2.
	\end{equation}
	
	set $a=\norm {c'}.$  It follows from the theorem of cosine with respect to the triangle $x,0,c'$ that
	$$ d^2(x,c') = 1+a^2-2a\cos(\sphericalangle([x,0],[0,c']))$$
	Implementing the same theorem for the triangle $p,0,c'$ we get
	$$ d^2(p,c') = 1+a^2-2a\cos(\sphericalangle([p,0],[0,c'])).$$
	The result then follows from \ref{eq: formula for angle }.
\end{proof}

 \subsection{The Chebychev center related to each of the groups } Now we find the Chebychev centers of all three fundamental domains. The center whose distance to any (all) vertex of the spherical triangle is the shortest among all three groups, will give the optimal finite homogeneous approximation. To this end,
we find the perpendicular bisectors of each of the sides in the three cases. Denote $u=x-y$ and $v=x-z$. Then the perpendicular bisectors are in the affine subspace spanned by $u,v$. Let $V$ be the space spanned by $u,v$, then an orthogonal vector to $u$ in $V$ is of the form $w_1=u+av$,
$$ \langle u+av,u \rangle =0.$$
Therefore,
$$ a= - \frac{\norm{u}}{\langle u,v\rangle}.$$
Therefore the line that pass through the midpoint $m_1=\frac{x+y}{2}$, is
$$l_1=\left\lbrace m_1+tw_1: t \in \BR \right\rbrace =\left\lbrace m_1+t(u-\frac{\norm{u}}{\langle u,v\rangle}v): t \in \BR \right\rbrace $$
Similarly the line that pass through $m_2 = \frac{x+z}{2}$ is,
$$l_2=\left\lbrace m_2+tw_2: t \in \BR \right\rbrace =\left\lbrace m_2+t(v-\frac{\norm{v}}{\langle u,v\rangle}u): t \in \BR \right\rbrace.$$
Both line intersects at the point the satisfies,
$$  m_1+t(u-\frac{\norm{u}}{\langle u,v\rangle}v)= m_2+s(v-\frac{\norm{v}}{\langle u,v\rangle}u)$$
or equivalently
\begin{equation} \label {eq: find Chebychev center}
 m_1-m_2=s(v-\frac{\norm{v}}{\langle u,v\rangle}u)-t(u-\frac{\norm{u}}{\langle u,v\rangle}v).
\end{equation}

We get a set of $3$ linear non homogeneous
equations which by the way constructed has a unique solution. We solved the equations for each group using matlab\textsuperscript{\textregistered}. The distance $d_H(G,S^2)$ will be then $d(c,x)$ where $c$ is the Chebychev center. We summarize the results in the following table.

\begin{table}[h!]

\begin{tabular}{|c|c|c|c|}
	\hline
	Group& $(t,s)$& Chebychev center & $d_H(G,S^2)$\\
	\hline
   $A_3$ &$(0.3255 ,0.0872)$ & $(0.6511,-0.3370,-0.3370)$&$0.5907$\\
\hline
$B_3$ &$(0.3929, 0.0397)$ & $(0.7858,-0.2498,-0.3255)$&$0.4628$\\
\hline

$H_3$ &$( 0.4485, 0.0153)$ & $(0.8971,-0.1655,-0.2548)$&$0.3208$\\	
\hline	
\end{tabular}
\label{tab: Chebychev centers}
\caption{Three Chebychev centers. The groups names are according to the Coxeter notation, $(t,s)$ is the solution to Equation \ref{eq: find Chebychev center} and $D_H(G,S^2)=d(c,x)=d(c,y)=d(c,z)$. }
\end{table}
 \begin{example}
 	We give now some examples of some known polyhedrons.
 	\begin{enumerate}
 		\item \textbf{The octahedron and the cube}. The octahedron is the orbit of $y$ under the octahedral group $B_3$. Note that $d(x,y) = 0.9194$ (clearly $d(y,z)$ is smaller). The Hausdorff distance from the orbit of $y$ to the $2$ dimensional sphere is $0.9194$. For every $t>0$ the orbit of $ty$ will also be an octahedron and all octahedrons arise in this way. One can easily compute that the optimal $t$ is $t=0.5774$. Then the distance is $d(G(ty),S^2) = 0.8165$. The cube is its dual. It is the orbit of $x$ (and any $tx$) and has the same Hausdorff distance from the sphere.
 		\item \textbf{The icosahedron and the dodecahedron}. Both Platonic solids are homogeneous spaces of $H_3$. The first is the orbit of $y$. Its stabilizer is therefore $G_2 = \langle R_2,R_3\rangle$. This group has 10 elements hence the orbit has 12 vertices and its faces are all regular triangles. The second is its dual. It is the orbit of $x$. It has 20 vertices and its faces are all regular pentagons. The distance $d(0.7947x,y) = 0.6071$ and this is also the Hausdorff distance of both polyhedrons from the sphere.
 		\item \textbf{The truncated icosahedron.} The truncated icosahedron for example is the polyhedron whose vertices are the $H_3$ orbit of the point that lies in the middle of $[y,z]$, which we denote by $m_3$. The reflection $R_3$ is its stabilizer and the action of $G_2 = \langle R_2,R_3\rangle$ has 5 points forming a regular pentagon. On the other hand the orbit group $G_1=\langle R_1,R_2 \rangle$  has 6 points which generate a regular hexagon. This homogeneous space is very familiar as it was the one skeleton of the official soccer ball for many years. The distance $d(m_3,x)=0.443$ and this is also the Hausdorff distance to the $2$ dimensional sphere. It is interesting to note that the orbit of $0.9945m_2$- the middle of $[x,y]$ is of Hausdorff distance $0.3354$. The orbit is the vertices set of the Archimedean solid, rhombicosidodecahedron. Indeed $m_2$ is very close to the Chebychev center (we had $s=0.0153$) therefore its orbit is very close to be optimal $G$ sphere approximation.

\item  In the table below we list all 13 Archimedean and five Platonic solids and their distance from the sphere. It is pleasant to mention here the work of Shaked Bader and Sapir Freizeit. They calculated these distances in an exercise  assigned to them by Tsachik Gelander. Note that if a solid is the orbit of some point $p$ then for every $t>0$, the orbit of $tp$ will give the same solid scaled differently. We performed grid search with matlab to get best approximation.
  	\end{enumerate}
 \end{example}

\begin{tabular}{|c|c|c|c|}
	\hline
	Solid	& generating point& isometry group& distance to the sphere \\
		\hline
\multicolumn{4}{|c|}{Platonic solids}\\
\hline
Tetrahedron & $1/3x,1/3y$&$A_3$& $0.9428$\\

\hline

Octahedron & $0.5774y$ &$B_3$& $0.8165$\\

\hline
cube&$0.5774x$&$B_3$& $0.8165 $\\
\hline
		Icosahedron& $0.7947y$ &$H_3$ &$0.6071$\\
\hline
	Dodecahedron& $0.7947x$ &$H_3$ &$0.6071$\\
\hline
\multicolumn{4}{|c|}{Archimedean solids}\\
\hline
Truncated Tetrahedron &$0.5774m_1,0.5774m_3$
&$A_3$&$0.8586$\\
\hline

Cuboctahedron &$0.8660m_2$
&$A_3$&$0.7071$\\
\hline
Truncated Cube & $0.7071m_1$& $B_3$ & $0.7388$\\
\hline
Truncated Octahedron &$m_3$& $B_3$ &$ 0.678$\\
\hline

Rhombicuboctahedron & $0.9659m_2$& $B_3$ & $0.5140$\\
\hline
Truncated Cuboctahedron & $0.9516(x+y+z)/3$&$B_3$&0.5248\\
\hline
Snub Cube &  $0.9516(x+y+z)/3$& $B_3$ &0.5248 \\
\hline
Icosidodecahedron & $0.8507z$ &$H_3$ &0.5257\\
\hline
Truncated Dodecahedron& $0.8507m_1$ &$H_3$ &0.5479\\
\hline
Truncated Icosahedron& $m_3$ & $H_3$& 0.443\\
\hline
Rhombicosidodecahedron& $09945m_2$ &$H_3$ &0.3354\\
\hline
Truncated Icosidodecahedron& $0.9727(x+y+z)/3$ &$H_3$ &0.3773\\
\hline
snub dodecahedron& $0.9727(x+y+z)/3$ &$H_3$ &0.3773\\
\hline
\end{tabular}

To conclude, the optimal finite homogeneous approximation of the $S^2$ sphere is a space with 120 elements which are the orbit of the Chebychev center of the fundamental domain of the group $H_3$. Like the Truncated icosidodecahedron its faces are squares, regular hexagons and regular decagons.
%
%\begin{thebibliography}{widest entry}
%	\bibitem[G12]{G}@article{gelander2012limits,
%		title={Limits of finite homogeneous metric spaces},
%		author={Gelander, Tsachik},
%		journal={arXiv preprint arXiv:1205.6553},
%		year={2012}
%	}
%	
%\end{thebibliography}

\bibliographystyle{plain}
\bibliography{soccer}

\end{document}